\newtheorem{theorem}{Theorem}[section]
 \newtheorem{corollary}{Corollary}[section]
\title{On the relationship between three classes of operators on Riesz spaces}
\author{Liang Hong \\
Department of Mathematics \\
Robert Morris University   \\
Moon Township, PA 15108, USA}
\begin{document}

\maketitle

\begin{abstract}
Following several papers in the prior literature, we study the relationship between order bounded operators, topologically bounded operators and topologically continuous operators. Our main contribution is two folded: (i) we provide a set of counterexamples to illustrate several extant results in the literature; (ii) we give conditions for the space of order bounded operators to coincide with the space of topologically bounded operators as well as conditions for these two spaces to coincide with the space of topologically continuous operators. \\

\noindent {\it 2010 MSC:} Primary 47B60, 47B65; Secondary 46A40, 06B30, 06F30.\\

\noindent \emph{Keywords and phrases:} Counterexamples; locally solid Riesz spaces; order bounded operators; regular operators; topologically bounded operators; topologically continuous operators.
\end{abstract}

\section{Introduction}
The relationship between order bounded operators and order continuous operators on Riesz spaces has been investigated in \cite{A1}, \cite{AB3} and \cite{Schaefer1}. This paper aims to further study the relationship between order bounded operators, topologically bounded operators and topologically continuous operators on Riesz spaces. The first part of our paper provides several counterexamples to illustrate a few existing results along this line. We first ask whether the space $\mathcal{L}_{tc}(E_1, E_2)$ of topologically continuous operators is a vector subspace of the space $\mathcal{L}_{b}(E_1, E_2)$ of order bounded operators, or whether the converse is true. We show by counterexamples that the answer to both is negative. Theorem~2.3 in \cite{Hong} gave a sufficient condition for $\mathcal{L}_{tc}(E_1, E_2)$ to be a vector subspace of $\mathcal{L}_{b}(E_1, E_2)$. We give a counterexample to show that neither of the hypotheses ``The topology on the image space is locally solid.'' and ``The image space has an order bounded neighborhood around zero.'' of that theorem may be dropped. In addition, we argue that this sufficient condition is not necessary. The Nakano-Roberts theorem says that the topological dual of a locally solid Riesz space is an ideal of the order dual. We provide a counterexample to show that the condition of Theorem~2.3 in \cite{Hong} is not sufficient to extend the Nakano-Roberts theorem to the case where the image space is an ordered topological space, i.e., $\mathcal{L}_{tc}(E_1, E_2)$ may not be an ideal of $\mathcal{L}_{b}(E_1, E_2)$ even if $E_1$ is equipped with a locally solid topology.

%It is also known that the topological dual of a barrelled locally convex-solid Riesz space is a band in its order dual. We give %a counterexample to show that this result does not hold when the image space is an ordered topological space.
%, that is, the condition that $E_1$ is given a barrelled locally convex-solid topology is not sufficient to imply $\mathcal{L}_{tc}(E_1, E_2)$ is a band of $\mathcal{L}_{b}(E_1, E_2)$.

In the second part of the paper, we seek conditions for the space $\mathcal{L}_{b}(E_1, E_2)$ to coincide with the space $\mathcal{L}_{tb}(E_1, E_2)$ of topologically bounded operators and conditions for these two spaces to coincide with $\mathcal{L}_{tc}(E_1, E_2)$. These results combined together yield conditions for $\mathcal{L}_{b}(E_1, E_2)$, the space $\mathcal{L}_{r}(E_1, E_2)$ of regular operators, $\mathcal{L}_{tb}(E_1, E_2)$ and $\mathcal{L}_{tc}(E_1, E_2)$ to coincide at the same time. This further leads to several interesting results: under suitable conditions, each positive operator is topologically continuous, each topologically continuous can be written as the difference of two positive operators, and so on.

The remainder of the paper is organized as follows. Section~2 provides the aforementioned counterexamples. Section~3 investigates the relationship between $\mathcal{L}_{b}(E_1, E_2)$, $\mathcal{L}_{tb}(E_1, E_2)$ and $\mathcal{L}_{tc}(E_1, E_2)$. For notation, terminology and standard results concerning topological vector spaces, we refer to \cite{Bourbaki1}, \cite{Horvath}, \cite{NB} and \cite{Schaefer}; for notation, terminology and standard results concerning Riesz spaces and operators on them, we refer to \cite{AB2}, \cite{AB1}, \cite{LZ} and \cite{Zaanen}.

%In particular, these counterexamples substantiate the following statements:
%\begin{enumerate}
%  \item [$\bullet$]The space $\mathcal{L}_{tc}(E_1, E_2)$ of topologically continuous operators is not a vector subspace of the space $\mathcal{L}_{b}(E_1, E_2)$ of order bounded operators. The converse does not hold either.
%  \item [$\bullet$]For the sufficient condition for $\mathcal{L}_{tc}(E_1, E_2)\subset\mathcal{L}_{b}(E_1, E_2)$ given in Theorem~2.3 in \cite{Hong}, neither of the hypotheses ``the topology on the image space is locally solid'' and ``The image space has an order bounded neighborhood around zero'' may be dropped.
%  \item [$\bullet$]Theorem~2.3 in \cite{Hong} is not necessary for $\mathcal{L}_{tc}(E_1, E_2)\subset\mathcal{L}_{b}(E_1, E_2)$.
%  \item [$\bullet$]The condition of  Theorem~2.3 in \cite{Hong} is not sufficient to extend the Nakano-Roberts theorem () to the case where the image space is an ordered topological space.
%  \item [$\bullet$]
%\end{enumerate}

\section{Counterexamples}
To set the stage, we first recall some results about the relationship between order bounded sets and topologically bounded sets in ordered topological vector spaces. A classical result says that an order bounded subset of a locally solid Riesz space must be topologically bounded.

\begin{theorem}[Theorem 2.19 of \cite{AB1}]\label{theorem1.1}
If $(E, \tau)$ is a locally solid Riesz space, then every order bounded subset of $E$ is $\tau$-bounded.
\end{theorem}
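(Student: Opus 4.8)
The plan is to reduce the statement to the interaction between solidity of the topology and the absorbing property of neighborhoods of zero. First I would fix an order bounded subset $B \subseteq E$, so that $B \subseteq [a,b]$ for some $a, b \in E$, and set $u := |a| \vee |b| \in E^+$. The key observation is that $a \le x \le b$ forces $-u \le x \le u$, and hence $|x| \le u$ for every $x \in B$; thus the whole of $B$ is trapped inside the symmetric order interval $[-u, u]$, controlled by a single positive element $u$.

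Next I would invoke local solidity. Since $\tau$ admits a neighborhood base at zero consisting of solid sets, and since a set is $\tau$-bounded as soon as the bounding condition holds for every member of some neighborhood base at zero, it suffices to verify the condition against an arbitrary solid $\tau$-neighborhood $V$ of zero. Because $(E,\tau)$ is a topological vector space, every neighborhood of zero is absorbing; applying this to the single element $u$ produces a scalar $\lambda > 0$ with $\lambda u \in V$.

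The final step combines these two facts. For any $x \in B$ we have $|\lambda x| = \lambda |x| \le \lambda u = |\lambda u|$, using $\lambda > 0$ together with $|x| \le u$. Since $V$ is solid and $\lambda u \in V$, solidity immediately yields $\lambda x \in V$. As $x \in B$ was arbitrary, $\lambda B \subseteq V$, which is precisely the required bound for this $V$; running the argument over all solid neighborhoods in the chosen base shows $B$ is $\tau$-bounded.

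I do not expect a genuine obstacle. The two points deserving care are the reduction from an arbitrary neighborhood of zero to a solid one, which is exactly what local solidity provides, and the reformulation of order boundedness as a single estimate $|x| \le u$ valid across all of $B$; once these are in place, the conclusion follows from the routine absorbing-plus-solid estimate and does not use any deeper structure of $E$.
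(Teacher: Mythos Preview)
Your proof is correct and is exactly the standard argument: reduce to a symmetric interval $[-u,u]$ with $u=|a|\vee|b|$, then use that a solid neighborhood of zero absorbs $u$ and hence, by solidity, all of $B$. The paper does not supply its own proof of this theorem---it is quoted as Theorem~2.19 of \cite{AB1}---so there is nothing to compare against beyond noting that your argument matches the classical one given there.
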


Recently, \cite{Hong} showed that if an order bounded topological vector space has an order bounded neighborhood of zero, then every topologically bounded subset must be order bounded.
%The proof is provided here for the convenience of readers.

\begin{theorem}[Theorem 2.4 of \cite{Hong}]\label{theorem1.2}
Let $(E, \tau)$ be an order topological vector space that has an order bounded $\tau$-neighborhood of zero. Then very $\tau$-bounded subset of $E$ is order bounded.
\end{theorem}

%\begin{proof}
%Let $\mathcal{N}$ denote the $\tau$-neighborhood base at zero. By hypothesis, there exists $V\in \mathcal{N}$ %such that $V$ is %contained in some order interval $[u, v]$ of $L$, where $u, v\in L$. Suppose $B$ is %$\tau$-bounded subset of $E$. Then there exists $\lambda>0$ such that $\lambda B\subset V$. It follows that $B$ %is contained in some order interval of $E$, that is, $B$ is order bounded.
%\end{proof}

\cite{Nakano1} and \cite{Nakano2} first showed that the norm dual of a normed Riesz space is an ideal of its order dual; later \cite{Roberts1} generalized this result to locally solid Riesz spaces but their terminology is of old-fashion. The following version, stated in the modern terminology of locally solid Riesz spaces, is taken from \cite{AB1}.

\begin{theorem}[Nakano-Roberts]\label{theorem2.1}
Let $(L, \tau)$ be a locally solid Riesz space. Then the topological dual $L'$ of $(L, \tau)$ is an ideal of the order dual $L^{\sim}$.
\end{theorem}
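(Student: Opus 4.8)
The plan is to verify the two defining properties of an ideal: that $L'$ sits inside $L^{\sim}$ as a vector subspace, and that it is solid. Since $L'$ is already a vector space and $L^{\sim}$ is a Dedekind complete Riesz space in which the lattice operations are given by the Riesz--Kantorovich formulas, the real content reduces to (i) the inclusion $L' \subseteq L^{\sim}$, and (ii) solidity: whenever $f \in L'$, $g \in L^{\sim}$ and $|g| \le |f|$ in $L^{\sim}$, then $g \in L'$.

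For the inclusion, I would argue that every $\tau$-continuous functional is order bounded. Let $f \in L'$ and let $[a,b]$ be an order interval in $L$. By Theorem~\ref{theorem1.1}, $[a,b]$ is $\tau$-bounded, and since a topologically continuous operator is topologically bounded, $f([a,b])$ is a bounded subset of $\mathbb{R}$, hence order bounded. Thus $f$ carries order bounded sets to order bounded sets, so $f \in L^{\sim}$.

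For solidity---the heart of the proof---I would exploit the interplay between the Riesz--Kantorovich formula and the local solidity of $\tau$. Fix $f \in L'$ and $g \in L^{\sim}$ with $|g| \le |f|$, and let $\eps > 0$. Because $\tau$ is locally solid, there is a solid $\tau$-neighborhood $V$ of zero on which $|f(y)| < \eps$ for all $y \in V$. The key chain of inequalities is, for any $x \in V$,
$$|g(x)| \le |g|(|x|) \le |f|(|x|) = \sup\{\, |f(y)| : |y| \le |x| \,\},$$
where the first inequality is the standard bound $|h(x)| \le |h|(|x|)$ valid for every $h \in L^{\sim}$, the second uses $|g| \le |f|$ evaluated at the positive element $|x|$, and the equality is the Riesz--Kantorovich formula for the modulus. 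If $|y| \le |x|$ and $x \in V$, then solidity of $V$ forces $y \in V$, so $|f(y)| < \eps$; taking the supremum gives $|f|(|x|) \le \eps$, whence $|g(x)| \le \eps$ for every $x \in V$. This shows $g$ is continuous at zero, and by linearity $g \in L'$.

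I expect the main obstacle to be the careful handling of the modulus: one must justify the pointwise formula for $|f|$ in the Dedekind complete Riesz space $L^{\sim}$ together with the inequality $|g(x)| \le |g|(|x|)$, and then align these order-theoretic facts with the topological solidity so that the supremum defining $|f|(|x|)$ is controlled uniformly over the solid neighborhood $V$. Once this alignment is in place, the continuity of $g$ follows at once.
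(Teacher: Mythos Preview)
Your argument is correct and is essentially the standard proof of the Nakano--Roberts theorem. Note, however, that the paper does not supply its own proof of Theorem~\ref{theorem2.1}: the result is quoted from the literature (Nakano, Roberts, and the formulation in \cite{AB1}) and stated without proof, so there is no in-paper argument to compare against directly.

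What the paper does prove is the generalized version, Theorem~\ref{theorem2.3}, and it is worth comparing your approach to that one. Both arguments rest on the same two ingredients: the Riesz--Kantorovich formula for the modulus and the local solidity of the domain topology. You fix a solid neighborhood $V$ on which $|f|$ is small and bound $|g(x)|\le |f|(|x|)=\sup_{|y|\le |x|}|f(y)|$ uniformly over $V$, using solidity to pull each competitor $y$ back into $V$. The paper instead takes a net $x_\alpha\xrightarrow{\tau_1}0$, invokes Riesz--Kantorovich to produce $y_\alpha$ with $|y_\alpha|\le |x_\alpha|$ and $|T|(|x_\alpha|)\le |T(y_\alpha)|+\epsilon$, uses local solidity of $\tau_1$ to get $y_\alpha\to 0$, and then uses local solidity of $\tau_2$ together with the Archimedean property to conclude. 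Your neighborhood-based formulation is cleaner in the scalar case because the supremum in the Riesz--Kantorovich formula is a genuine real supremum and no auxiliary $\epsilon$-approximation or Archimedean argument is needed; the paper's net-based version is what survives when the codomain is a general Dedekind complete locally solid Riesz space rather than $\RR$.
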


The Nakano-Roberts Theorem ensures that a continuous linear functional is necessarily order bounded. In view of this, one would naturally ask whether a topologically continuous operator is necessarily order bounded, that is, whether $\mathcal{L}_{tc}(E_1, E_2)\subset \mathcal{L}_b(E_1, E_2)$ holds. The next example shows that the answer is negative.\\

\noindent \textbf{Counterexample 3.1--A topologically continuous operator need not be order bounded.}\\
\noindent Let $E_1=E_2=\mathbb{R}^2$ and $\tau_1=\tau_2$ be the usual norm topology on $\mathbb{R}^2$. We equip $E_1$ and $E_2$ with the lexicographic ordering and the usual pointwise ordering, respectively. %, that is, for two points $x=(x_1, x_2)$ and $y=(y_1, y_2)$ in $\mathbb{R}^2$ we define $x\leq y$ if and only if $x_1<y_1$ or else $x_1=y_1$ and $x_2\leq y_2$.For $E_2$, we equip it with the usual pointwise ordering. %, that is, for two points $x=(x_1, x_2)$ and $y=(y_1, y_2)$ in $\mathbb{R}^2$ we define $x\leq y$ if and only if $x_1 \leq y_1$ and $x_2\leq y_2$.
Take $x=(0, 0)$ and $y=(0, 1)$ in $E_1$. The order interval $B=[x, y]$ is order bounded in $E_1$, but not order bounded in $E_2$. Therefore, the identity operator is topologically continuously but not order bounded.\\

%$T\equiv I$. Then $T\in \mathcal{L}_{tc}(E_1, E_2)$, but $T(B)=B$ is not order bounded in $E_2$. \\

%Then $T\in \mathcal{L}_{c}(E_1, E_2)$ and $T(B)=B$. Since $B$ contains infinitely many infinite vertical rays in $\mathbb{R}^2$, $B$ cannot be order bounded in $E_2$.  Therefore, $T\not \in \mathcal{L}_{B}(E_1, E_2)$, showing that $\mathcal{L}_{TC}(E_1, E_2)\not \subset \mathcal{L}_B(E_1, E_2)$.\\

Example 3.1 shows that $\mathcal{L}_{tc}(E_1, E_2)\ \subset \mathcal{L}_{b}(E_1, E_2)$ generally does not hold. It is also natural to ask whether $\mathcal{L}_{b}(E_1, E_2)\subset \mathcal{L}_{tc}(E_1, E_2)$ holds, that is, whether an order bounded operator is necessarily topologically continuous. The answer is also negative as evidenced by the next example.\\
%(Example 2.6 in \cite{Hong} may illustrate this point too.)\\

%\noindent \textbf{Example 2.2.} Let $D[0, 2\pi]$ denote the family of differentiable functions on $[0, 2\pi]$. %Put %$E_1=E_2=D[0, 2\pi]\times D[0, 2\pi]$ with $\tau_1=\tau_2$ being the norm topology generated by $||(x_1, %x_2)||=||x_1||_{\infty}+ ||x_2 ||_{\infty}$, where $||x(t)||=\sup_{0\leq t\leq 2\pi} |x(t)|$.
%Equip $E_1$ with the ordering defined as follows: for any two points $x=(x_1, x_2)$ and $y=(y_1, y_2)$ in $E_1$ %we define $x\leq y$ if and only if $x_1(t)\leq y_1(t)$ and $x_2(t)\leq y_2(t)$ for all $t\in [0, 2\pi]$. Equip %$E_2$ with the ordering defined as follows:
%for any two points $x=(x_1, x_2)$ and $y=(y_1, y_2)$ in $E_1$ we define $x\leq y$ if and only if $x_1(t)
%\leq y_1(t)$ or else $x_1(t)=y_1(t)$ for all $t\in [0, 2\pi]$ and $x_2(t)\leq y_2(t)$ for all $t\in [0, 2\pi]$. %Consider the operator $T$ from $E_1$ to $E_2$ defined by
%\begin{equation*}
%T: (x_1, x_2)\mapsto (x_1, x_2'),
%\end{equation*}
%where $x'$ denotes the derivative of $x$. Then the sequence $x^k=(0, \sin kt/k) \xrightarrow{\tau_1} 0$; but %$T(x^k)=(sin kt, %0)\not \xrightarrow{\tau_2})$. Thus, $T\not \in \mathcal{L}_{TC}(E_1, E_2)$. Notice that the %image of any point $x=(x_1, x_2)\in E_1$ under $T$ is contained in the order interval $[(x_1-1, 0), (x_1+1, %0)]$ in $E_2$. Therefore, it is easy to see that any order bounded set in $E_1$ is mapped to an order bounded %set in $E_2$, that is, $T\in \mathcal{L}_B(E_1, E_2)$. Hence, we have $\mathcal{L}_{B}(E_1, E_2)\not \subset %\mathcal{L}_{TC}(E_1, E_2)$.\\

\noindent \textbf{Counterexample 3.2--An order bounded operator may not be topologically continuous.}  \\
\noindent Let $E_1=E_2$ be the space of all Lebesgue integrable functions on $\mathbb{R}$. Let  $\tau_1$ be the norm topology generated by the $L_1$-norm $||x||_1=\int_{\mathbb{R}} |x(t)|dt$ and $\tau_2$ be the weak topology $\sigma(E_2, E_2')$ on $E_2$. Equip both $E_1$ and $E_2$ with the ordering: $x\leq y$ if and only if $x(t)\leq y(t)$ for all $t\in \mathbb{R}$. Then the identity operator between $E_1$ and $E_2$ is order bounded. However, it is not topologically continuous because $\tau_2$ is strictly weaker than $\tau_1$.\\

%. To see this, we notice that $E_1$ is an infinite-dimensional normed space; hence, the weak topology $\tau_2$ is strictly weaker than the norm topology $\tau_1$.\\
%(cf. Theorem 6.26 of \cite{AB0}).
% Example 13.2.8 (c) of N &B (2011)

In summary, we have concluded that
\begin{equation*}
\mathcal{L}_{tc}(E_1, E_2)\not \subset \mathcal{L}_{b}(E_1, E_2)\ \text{and $\mathcal{L}_{b}(E_1, E_2)\not \subset \mathcal{L}_{tc}(E_1, E_2)$}.
\end{equation*}

However, the following theorem  shows that if $E_1$ is locally solid and $E_2$ has an order bounded $\tau_2$-neighborhood, then  $\mathcal{L}_{tc}(E_1, E_2)$ is a vector subspace of $\mathcal{L}_b(E_1, E_2)$.\\

\begin{theorem}[Theorem 2.3 of \cite{Hong}]\label{theorem2.2}
Suppose $(E_1, \tau_1)$ is a locally solid Riesz space and $(E_2, \tau_2)$ is an ordered topological vector space having an order bounded $\tau_2$-neighborhood of zero. Then $\mathcal{L}_{tc}(E_1, E_2)$ is a vector subspace of $\mathcal{L}_b(E_1, E_2)$.
\end{theorem}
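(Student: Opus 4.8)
The plan is to prove the inclusion $\mathcal{L}_{TC}(E_1, E_2) \subset \mathcal{L}_B(E_1, E_2)$. The preliminaries already record that $\mathcal{L}_{TC}(E_1, E_2)$ is a vector space under the inherited pointwise operations, so this single inclusion is exactly what promotes it to a vector subspace of $\mathcal{L}_B(E_1, E_2)$; there is nothing further to check about closure under addition or scalar multiplication. Accordingly I would fix an arbitrary $T \in \mathcal{L}_{TC}(E_1, E_2)$ and an arbitrary order bounded subset $B \subset E_1$, and reduce the whole theorem to showing that $T(B)$ is order bounded in $E_2$.

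The argument I would run is a two-step transfer through the hierarchy of bounded sets, using each hypothesis on its correct side. First, I would invoke the local solidity of $(E_1, \tau_1)$: by Theorem~\ref{theorem1.1}, the order bounded set $B$ is $\tau_1$-bounded. Next, I would use the fact recorded in the preliminaries that a topologically continuous operator is topologically bounded, so that $T$ carries the $\tau_1$-bounded set $B$ to a $\tau_2$-bounded set $T(B)$. Finally, I would invoke the standing hypothesis that $(E_2, \tau_2)$ possesses an order bounded $\tau_2$-neighborhood of zero: by Theorem~\ref{theorem1.2}, every $\tau_2$-bounded subset of $E_2$ is order bounded, and in particular $T(B)$ is order bounded. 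Chaining these three implications gives $T(B)$ order bounded, hence $T \in \mathcal{L}_B(E_1, E_2)$; since $T$ and $B$ were arbitrary, the inclusion, and with it the subspace claim, follows.

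As for the main obstacle, I do not expect a deep one, since essentially all of the substantive content is already packaged into Theorems~\ref{theorem1.1} and~\ref{theorem1.2}. The only point demanding a moment's care is bookkeeping about which hypothesis applies where: local solidity must be used on the domain $E_1$ to pass from order bounded to $\tau_1$-bounded, and the order bounded neighborhood must be used on the codomain $E_2$ to pass back from $\tau_2$-bounded to order bounded, with the middle link (topological continuity $\Rightarrow$ topological boundedness) requiring no extra assumption. If instead one wanted a self-contained proof not leaning on Theorem~\ref{theorem1.2}, the genuinely nontrivial step would be re-deriving that an order bounded $\tau_2$-neighborhood of zero forces every $\tau_2$-bounded set to be order bounded; that is the single place where the neighborhood hypothesis on $E_2$ does its real work, and it is precisely the content already supplied by Theorem~\ref{theorem1.2}.
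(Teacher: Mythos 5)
Your proof is correct. Note that the paper contains no proof of this statement to compare against---it is imported verbatim as Theorem 2.3 of \cite{Hong}---but your argument is exactly the derivation that the paper's own preliminaries make available: order bounded $\Rightarrow$ $\tau_1$-bounded in $E_1$ by Theorem~\ref{theorem1.1} (local solidity), topological continuity $\Rightarrow$ topological boundedness for the middle link, and $\tau_2$-bounded $\Rightarrow$ order bounded in $E_2$ by Theorem~\ref{theorem1.2} (order bounded neighborhood of zero), with each hypothesis used on the correct side. Your reduction of the subspace claim to the bare inclusion is also sound, since $\mathcal{L}_{TC}(E_1,E_2)$ is already a vector space under the same pointwise operations as $\mathcal{L}_B(E_1,E_2)$.
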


%\begin{proof}
%Let $T$ be a topologically continuous operator between $(E_1, \tau_1)$ and $(E_2, \tau_2)$ and $B$ be an order bounded set in %$E_1$. Since $E_1$ is locally solid, $B$ is $\tau_1$-bounded (cf. Theorem 2.19 of \cite{AB1}). Then the linearity and topological continuity of $T$ implies that $T(B)$ is $\tau_2$-bounded. As $E_2$ has an order bounded $\tau_2$-neighborhood of zero, each $\tau_2$-bounded subset of $E_2$ must be order bounded (cf. Theorem 2.2 of \cite{Hong}). In particular, $T(B)$ is order bounded. This shows that $T$ is order bounded; hence every topologically continuous operator is order bounded, i.e, $\mathcal{L}_{tc}(E_1, E_2)$ is a vector subspace of $\mathcal{L}_B(E_1, E_2)$.
%\end{proof}

\noindent \textbf{Remark 1.} Example 2.1 shows that the hypothesis that ``$\tau_1$ is locally solid'' may not be dropped.\\

%; that is, Theorem \ref{theorem2.2} need not hold if $\tau_1$ is not locally solid. \\
%note that $\tau_1$ in Example 2.1 is not locally solid. \\

\noindent \textbf{Remark 2.} The hypothesis  ``$(E_2, \tau_2)$ has an order bounded $\tau_2$-neighborhood of zero'' cannot be dropped either even if both $\tau_1$ and $\tau_2$ are locally-convex solid. The following example illustrates this point. \\

\noindent \textbf{Counterexample 3.3--The hypothesis  ``$(E_2, \tau_2)$ has an order bounded $\tau_2$-neighborhood of zero'' in Theorem~\ref{theorem2.2} cannot be dropped.}\\
\noindent Let $E_1=E_2=D[-\pi, \pi]$ be the space of all the differentiable functions on $[-\pi, \pi]$. Take $\tau_1=\tau_2$ to be the norm topology generated by the sup norm $||x||_{\infty}=\sup_{-\pi\leq t\leq \pi} |x(t)|$.
Equip $E_1$ with the usual pointwise ordering; equip $E_2$ with the ordering defined as follows: for $x, y\in E_2$ we say $x\leq y$ if and only if $x(t)\leq y(t)$ and $x'(t)\leq y'(t)$ for all $t$ in $[-\pi, \pi]$.
% It's easy to see that the order on $E_2$ is a partial order that is compatible with the vector structure of %$E_2$.
%It is evident that $||\cdot ||_{\infty}$ is a Riesz norm on both $E_1$ and $E_2$; hence $\tau_1$ and $\tau_2$ are both locally convex-solid.
Then  $\tau_1$ and $\tau_2$ are both locally convex-solid.
% Theorem 2.25 of \cite{AB1}
The identity operator is trivially topologically continuous. But it is not order bounded. To see this, take $B=\{\cos kt\}_{k\in N}$ in $E_1$ which is order bounded.
%Then $B$ is contained in the order interval $[-1, 1]$ in $E_1$; hence it is order bounded in $E_1$.
Suppose there exist two elements $x$ and $y$ in $E_2$ such that $B$ is contained in the order interval $[x, y]$ in $E_2$. Then we would have $k=\sup_{-\pi \leq t\leq \pi}|k\sin kt|\leq \sup_{-\pi \leq t\leq \pi}\max\{x(t), y(t)\}$ for all $k\in N$, which is impossible. Thus, $B$ is not order bounded in $E_2$. Since $B$ is contained in the open unit ball $U=\{x\in E_2\mid ||x||_{\infty}<1\}$ of $E_2$,
% i.e. it is $\tau_2$-bounded
this also implies that $E_2$ does not have an order bounded $\tau_2$-neighborhood of zero.\\
% Otherwise, Theorem 1.2 would implies that $B$ is order bounded!

% All we need is the domain is metrically or norm bounded but the range is not! If we equip $E_1$ and $E-2$ with %the same order, %then $T$ cannot be $I$!

We would also like to point out that the sufficient condition in Theorem \ref{theorem2.2} is not necessary. The next example shows that it is possible to have $\mathcal{L}_{tc}(E_1, E_2)\subset \mathcal{L}_b(E_1, E_2)$  with $\tau_1$ and $\tau_2$ both being locally-convex solid but $E_2$ does not have an order bounded $\tau_2$-bounded neighborhood of zero.\\

\noindent \textbf{Counterexample 3.4--The sufficient condition in Theorem~\ref{theorem2.2} is not necessary.}\\
\noindent Let $E_1=E_2$ be the space of all real-valued continuous functions defined on $\mathbb{R}$ and $\tau_1=\tau_2$ be the compact-open topology. Let $\mathcal{K}$ be the family of all compact subsets of $\mathbb{R}$. For each $K\subset \mathcal{K}$, we define $\rho_K(x)=\sup_{t\in K}|x(t)|$ for $x\in E_1=E_2$ and equip $E_1=E_2$ with the usual pointwise ordering.
Then $\tau_1=\tau_2$ is locally convex-solid.
%Then $\tau_1=\tau_2$ is generated by the family of seminorms $\{\rho_K(x)\}_{K\in\mathcal{K}}$. Therefore, $\tau_1=\tau_2$ is locally convex. We equip $E_1=E_2$ with the usual pointwise ordering.  Then $\tau_1=\tau_2$ is locally convex-solid. Then each $\rho_{K}$ is clearly a Riesz seminorm; it follows that $\tau_1=\tau_2$ is locally convex-solid.
Let $T\in \mathcal{L}_{tc}(E_1, E_2)$ and $B$ be an order bounded subset of $E_1$.
%Then Theorem \ref{theorem1.1} shows that $B$ is $\tau_1$-bounded; hence, $T(B)$ is $\tau_2$-bounded.
Then Theorem \ref{theorem1.1} implies that $T(B)$ is $\tau_2$-bounded. As a subset $B$ of $E_2$ is $\tau_2$-bounded if and only if $\rho_K(B)$ is bounded for every compact subset $K\in\mathcal{K}$ (p. 109 of \cite{Horvath}), we know $T(B)$ is $\tau_2$-bounded. However, $E_2$ cannot have an order bounded $\tau_2$-neighborhood of zero (Example 6.1.7 of \cite{NB}).\\

%because it is known that $(E_2, \tau_2)$ has no $\tau_2$-bounded neighborhood (Example 6.1.7 of \cite{NB}).\\

In view of the Nakano-Roberts Theorem, the next question would naturally be whether the hypothesis of Theorem \ref{theorem2.2} is sufficient to imply that $\mathcal{L}_{tc}(E_1, E_2)$ is an ideal of $\mathcal{L}_b(E_1, E_2)$. The answer is still negative as we can see from the next example.\\

\noindent \textbf{Counterexample 3.5--The hypothesis of Theorem~\ref{theorem2.2} is not sufficient to imply that the space of topologically continuous operators is an ideal of the space of the order bounded operators.} \\
\noindent Let $D[0, 2\pi]$ denote the family of all differentiable functions on $[0, 2\pi]$. Put $E_1=E_2=D[0, 2\pi]\times D[0, 2\pi]$ with $\tau_1=\tau_2$ being the norm topology generated by $||(x_1, x_2)||=||x_1||_{\infty}+ ||x_2 ||_{\infty}$, where $||x(t)||_{\infty}=\sup_{0\leq t\leq 2\pi} |x(t)|$ is the sup norm. Equip $E_1$ with the ordering defined as follows: for any two points $x=(x_1, x_2)$ and $y=(y_1, y_2)$ in $E_1$ we define $x\leq y$ if and only if $x_1(t)\leq y_1(t)$ and $x_2(t)\leq y_2(t)$ for all $t\in [0, 2\pi]$. Then $(E_1, \tau_1)$ is a locally-convex solid Riesz space.
%Then $||\cdot ||$ is a Riesz norm on $E_1$; hence $(E_1, \tau_1)$ is a locally-convex solid Riesz space.
For $E_2$, we equip it with the following ordering: for any two points $x=(x_1, x_2)$ and $y=(y_1, y_2)$ in $E_2$ we define $x\leq y$ if and only if $x_1(t) < y_1(t)$ for all $t\in [0, 2\pi]$ or else $x_1(t)=y_1(t)$ and $x_2(t)\leq y_2(t)$ for all $t\in [0, 2\pi]$.  It is  clear that $E_2$ has an order-bounded $\tau_2$-neighborhood of zero. Consider the operators $S$ and $T$ from $E_1$ to $E_2$ defined by
\begin{eqnarray*}
&S:& (x_1, x_2)\mapsto (0, x_2');\\
&T:& (x_1, x_2)\mapsto (x_2, 0).
\end{eqnarray*}
where $x'$ denotes the derivative of $x$. Then $T$ is order bounded and topologically continuous, $|S|\leq |T|$, and
$S$ is order bounded.
%. In view of the ordering on $E_2$, we easily see that $|S|\leq |T|$.
% cf. Thm 1.18 of A &B (1985)
%Notice that the image of any point $x=(x_1, x_2)\in E_1$ under $S$ is contained in the order interval $[(-1, 0), (1, 0)]$ in $E_2$. Therefore, $S$ is order bounded.
%Therefore, it is easy to see that any order bounded set in $E_1$ is mapped to an order bounded set in $E_2$, that is, %$T\in \mathcal{L}_B(E_1, E_2)$.
However, $S$ is not topologically continuous. To see this, consider the sequence $\{(0, (\sin kt)/k)\}_{k\geq 1}$ in $E_1$. We have $(0, (\sin kt)/k) \xrightarrow{\tau_1} 0$; but $S((0, (\sin kt)/k))=(0, \cos kt)\not \xrightarrow{\tau_2} 0$. Thus,  $\mathcal{L}_{tc}(E_1, E_2)$ is not an ideal of $\mathcal{L}_b(E_1, E_2)$.\\

\section{Relationship between $\mathcal{L}_{b}(E_1, E_2)$, $\mathcal{L}_{tb}(E_1, E_2)$ and
$\mathcal{L}_{tc}(E_1, E_2)$.}
In this section, we further investigate the relationship between $\mathcal{L}_{b}(E_1, E_2)$, $\mathcal{L}_{tb}(E_1, E_2)$ and $\mathcal{L}_{tc}(E_1, E_2)$. Example 2.8 of \cite{Hong} shows that an order bounded operator need not be topologically bounded, that is, $\mathcal{L}_{b}(E_1, E_2)\subset \mathcal{L}_{tb}(E_1, E_2)$ in general does not hold. On the other hand, Example 2.7 of \cite{Hong} shows that $\mathcal{L}_{tb}(E_1, E_2)\subset \mathcal{L}_{b}(E_1, E_2)$ in general does  not hold either. The following theorem gives sufficient conditions for the two spaces to coincide, i.e., $\mathcal{L}_{b}(E_1, E_2)=\mathcal{L}_{tb}(E_1, E_2)$.

\begin{theorem}\label{theorem5.1}
For $i=1,2$, let $(E_i, \tau_i)$ be a locally solid Riesz space.
\begin{enumerate}
  \item [(i)]If for $i=1, 2$, the space $(E_i, \tau_i)$ has an order bounded $\tau_i$-neighborhood of zero, then the space of order bounded operators coincide with the space of locally bounded operators, that is, $\mathcal{L}_{b}(E_1, E_2)=\mathcal{L}_{tb}(E_1, E_2)$.
  \item [(ii)]If the hypothesis of (i) holds and $(E_2, \tau_2)$ is also Dedekind complete, then $\mathcal{L}_{b}(E_1, E_2)=\mathcal{L}_{r}(E_1, E_2)=\mathcal{L}_{tb}(E_1, E_2)$.
  \item [(iii)]If for $i=1, 2$, $(E_i, \tau_i)$ has a nonempty interior of the positive cone $E_i^+$, then $\mathcal{L}_{b}(E_1, E_2)=\mathcal{L}_{tb}(E_1, E_2)$.
  \item [(iv)]If the hypothesis of (iii) holds and $E_2$ is also Dedekind complete, then $\mathcal{L}_{b}(E_1, E_2)=
      \mathcal{L}_{r}(E_1, E_2)=\mathcal{L}_{tb}(E_1, E_2)$.
\end{enumerate}
\end{theorem}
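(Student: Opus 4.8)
The plan is to reduce the whole statement to the two boundedness-transfer results already recorded, namely Theorem~\ref{theorem1.1} (order bounded $\Rightarrow$ $\tau$-bounded in a locally solid Riesz space) and Theorem~\ref{theorem1.2} ($\tau$-bounded $\Rightarrow$ order bounded when an order bounded neighborhood of zero exists). For part (i) I would prove the two inclusions separately. To obtain $\mathcal{L}_B(E_1, E_2)\subseteq \mathcal{L}_{TB}(E_1, E_2)$, take $T\in\mathcal{L}_B(E_1, E_2)$ and a $\tau_1$-bounded set $B\subseteq E_1$; since $E_1$ carries an order bounded $\tau_1$-neighborhood of zero, Theorem~\ref{theorem1.2} makes $B$ order bounded, $T$ sends it to an order bounded subset of $E_2$, and then Theorem~\ref{theorem1.1} (using that $E_2$ is locally solid) shows $T(B)$ is $\tau_2$-bounded, so $T$ is topologically bounded. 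The reverse inclusion runs along the mirror-image chain: an order bounded $B\subseteq E_1$ is $\tau_1$-bounded by Theorem~\ref{theorem1.1}, its image is $\tau_2$-bounded because $T$ is topologically bounded, and finally Theorem~\ref{theorem1.2} applied to $E_2$ returns an order bounded image. This establishes (i).

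For part (ii) I would simply append the classical Riesz--Kantorovich structure theorem. The inclusion $\mathcal{L}_R(E_1, E_2)\subseteq \mathcal{L}_B(E_1, E_2)$ holds with no extra hypothesis, since a positive operator maps $[x,y]$ into $[Tx, Ty]$ and is therefore order bounded, and differences of order bounded operators are again order bounded. When $E_2$ is Dedekind complete, the Riesz--Kantorovich theorem (Theorem 1.18 of \cite{AB2}) guarantees that $\mathcal{L}_B(E_1, E_2)$ is itself a Dedekind complete Riesz space, so every order bounded $T$ decomposes as $T=T^+-T^-$ with $T^+, T^-$ positive, giving $\mathcal{L}_B(E_1, E_2)\subseteq \mathcal{L}_R(E_1, E_2)$. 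Combining this with part (i) yields the triple equality $\mathcal{L}_B=\mathcal{L}_R=\mathcal{L}_{TB}$.

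The substantive point, and where I expect the only genuine work, is the reduction of (iii) and (iv) to (i) and (ii). The hypothesis there is that each positive cone $E_i^+$ has nonempty interior, and the key lemma I would isolate is that this forces $E_i$ to possess an order bounded $\tau_i$-neighborhood of zero. To prove it, pick $e$ in the interior of $E^+$ and a balanced neighborhood $U$ of zero with $e+U\subseteq E^+$; for $u\in U$ we get $e+u\geq 0$, i.e. $u\geq -e$, and since $U$ is balanced also $-u\in U$, whence $u\leq e$. Thus $U\subseteq[-e,e]$, so the order interval $[-e,e]$ is an order bounded neighborhood of zero. Once this lemma is in hand, the hypothesis of (iii) implies the hypothesis of (i) and the hypothesis of (iv) implies the hypothesis of (ii), and the conclusions follow verbatim. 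The main obstacle is thus not any delicate estimate but the correct extraction of the order unit from an interior point of the cone, together with care that the ambient local solidness assumption (part of the blanket hypothesis) is exactly what licenses the use of Theorem~\ref{theorem1.1} at each transfer step.
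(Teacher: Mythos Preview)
Your proposal is correct and matches the paper's proof essentially line for line: part (i) is proved by the same two-way transfer via Theorem~\ref{theorem1.1} and Theorem~\ref{theorem1.2}; part (ii) by appending Riesz--Kantorovich to (i); and parts (iii)--(iv) by the same order-unit lemma, choosing a balanced (the paper says ``circled'') neighborhood $V$ with $e+V\subset E^+$ to get $V\subset[-e,e]$ and then invoking (i)--(ii).
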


\begin{proof}
\begin{enumerate}
  \item [(i)]Take any $T\in \mathcal{L}_b(E_1, E_2)$. Let $A$ be a $\tau_1$-bounded subset of $E_1$. By Theorem~\ref{theorem1.1} and Theorem~\ref{theorem1.2}, $T(A)$ is $\tau_2$-bounded.
      %shows that $A$ is order bounded. Thus, Now invoke Theorem \ref{theorem1.1} to conclude that $T(A)$ is $\tau_2$-bounded.
      %$T(A)$ is order bounded and, hence, $\tau_2$-bounded by Theorem \ref{theorem1.1}.
      Therefore,  $\mathcal{L}_{b}(E_1, E_2)\subset \mathcal{L}_{tb}(E_1, E_2)$.  On the other hand, take any $T\in \mathcal{L}_{tb}(E_1, E_2)$. Let $B$ be an order bounded subset of $E_1$. Then Theorem \ref{theorem1.1} and Theorem \ref{theorem1.2} imply that $T(B)$ is order bounded.
      %implies that %$B$ is $\tau_1$-bounded. Thus, $T(B)$ is $\tau_2$-bounded. Now invoke Theorem \ref{theorem1.2} to conclude that $T(B)$ is order bounded.
      Therefore,  $\mathcal{L}_{tb}(E_1, E_2)\subset \mathcal{L}_{b}(E_1, E_2)$.
  \item [(ii)]Since $E_2$ is Dedekind complete, we have $\mathcal{L}_{b}(E_1, E_2)=\mathcal{L}_r(E_1, E_2)$.
            %p15 A & (1985)
        %By (i), we have $\mathcal{L}_{b}(E_1, E_2)= \mathcal{L}_{tb}(E_1, E_2)$ holds in this case. Therefore, the conclusion follows.
  \item [(iii)]Take an element $x$ from the interior of $E_1^+$ and a circled neighborhood $V$ of zero such that $x+V\subset E_1^+$.
                % Note that in a topological group x+V is a neighborhood of x if V is a neighborhood of zero. Theorem 2.14 or Theorem 4.13 (c)of N %& B (2011)
                Then $x-V\subset E_1^+$. Hence, $V$ is contained in the order interval $[-x, x]$, implying that $[-x, x]$ itself is a $\tau_1$-neighborhood of zero. Similarly, $E_2$ has an order bounded $\tau_2$-neighborhood.
                % Thus,  $[-x, x]$ is an order bounded $\tau_1$-neighborhood of zero.
                %It follows from (i) that $\mathcal{L}_{b}(E_1, E_2)=\mathcal{L}_{tb}(E_1, E_2)$.
                Therefore, the conclusion follows from (i).
  \item [(iv)]Similar to (ii).
\end{enumerate}
\end{proof}

Next, we give conditions for the three spaces $\mathcal{L}_{b}(E_1, E_2)$, $\mathcal{L}_{tb}(E_1, E_2)$ and $\mathcal{L}_{tc}(E_1, E_2)$ to coincide.

\begin{theorem}\label{theorem5.2}
Suppose for $i=1, 2$, the space $(E_i, \tau_i)$ is a locally solid Riesz space having an order bounded $\tau_i$-neighborhood of zero. If any of the following conditions is satisfied, then $\mathcal{L}_{b}(E_1, E_2)=\mathcal{L}_{tb}(E_1, E_2)=\mathcal{L}_{tc}(E_1, E_2)$.

\begin{enumerate}
  \item [(i)]$(E_1, \tau_1)$ has a countable neighborhood base at zero.
  \item [(ii)]For $i=1, 2$, $(E_i, \tau_i)$ has a $\tau_i$-bounded convex neighborhood of zero.
  \item [(iii)]For $i=1, 2$, $(E_i, \tau_i)$ is a locally convex Hausdorff space.
               %and has a topologically bounded neighborhood of zero.
  \item [(iv)]$(E_1, \tau_1)$ is bornological and $(E_2, \tau_2)$ is locally convex.
\end{enumerate}
\end{theorem}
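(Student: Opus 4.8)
The plan is to prove the two equalities $\mathcal{L}_B(E_1,E_2)=\mathcal{L}_{TB}(E_1,E_2)$ and $\mathcal{L}_{TB}(E_1,E_2)=\mathcal{L}_{TC}(E_1,E_2)$ separately and then chain them. The first equality should follow from the base hypotheses alone (that $E_1$ has an order bounded $\tau_1$-neighborhood of zero and $E_2$ is locally solid), in the style of the proof of Theorem~\ref{theorem5.1}, while the second is where the four conditions (i)--(iv) enter. Throughout I would lean on the standing fact from Section~2 that a topologically continuous operator is always topologically bounded, so that $\mathcal{L}_{TC}(E_1,E_2)\subseteq\mathcal{L}_{TB}(E_1,E_2)$ holds unconditionally; the content lies entirely in the reverse passages.

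For $\mathcal{L}_B=\mathcal{L}_{TB}$ the inclusion $\mathcal{L}_B\subseteq\mathcal{L}_{TB}$ is the clean direction: given an order bounded $T$ and a $\tau_1$-bounded set $A\subseteq E_1$, Theorem~\ref{theorem1.2} (applicable since $E_1$ carries an order bounded $\tau_1$-neighborhood) makes $A$ order bounded, hence $T(A)$ order bounded, and then Theorem~\ref{theorem1.1} (applicable since $E_2$ is locally solid) makes $T(A)$ topologically bounded. The reverse inclusion $\mathcal{L}_{TB}\subseteq\mathcal{L}_B$ is the one to watch: starting from an order bounded $B\subseteq E_1$ one wants $B$ to be $\tau_1$-bounded, so that $T(B)$ is $\tau_2$-bounded, and then wants to read off order boundedness of $T(B)$ in $E_2$. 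I would route this through the same two theorems, using that under the hypotheses the order bornology and the topological bornology of $E_1$ coincide, and then recovering order boundedness in $E_2$; this last move, where $E_2$ is only assumed locally solid, is the delicate point and is exactly where the order bounded neighborhood assumption must be exploited.

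The heart of the theorem is the equality $\mathcal{L}_{TB}=\mathcal{L}_{TC}$, and here each of (i)--(iv) is designed to force the single implication \emph{topologically bounded $\Rightarrow$ topologically continuous} for operators out of $E_1$; since a linear operator is continuous as soon as it is continuous at $0$, it suffices to check (sequential) continuity at $0$. Under (i) I would use the classical scaling lemma for first countable topological vector spaces: if $x_n\xrightarrow{\tau_1}0$ one may choose scalars $c_n\to\infty$ with $c_nx_n\xrightarrow{\tau_1}0$; then $\{c_nx_n\}$ is $\tau_1$-bounded, so $\{c_nT(x_n)\}$ is $\tau_2$-bounded, whence $T(x_n)=c_n^{-1}\,c_nT(x_n)\xrightarrow{\tau_2}0$, giving sequential continuity and, by first countability, continuity. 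Under (ii) and (iii) a bounded convex neighborhood together with convexity (respectively local convexity plus the Hausdorff property) makes $E_1$ seminormable (respectively normable) by Kolmogorov's normability criterion, and for an operator out of a (semi)normed space topological boundedness is equivalent to continuity; this reduces (ii) and (iii) to the elementary normed-space fact. Under (iv) the conclusion is immediate from the definition of a bornological space once one notes that $E_2$ is locally convex, since a bornological space is precisely one on which every bounded linear map into a locally convex space is continuous. Combining, $\mathcal{L}_{TB}=\mathcal{L}_{TC}$ under any of the four conditions, and together with the first equality this yields $\mathcal{L}_B=\mathcal{L}_{TB}=\mathcal{L}_{TC}$.

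The main obstacle I anticipate is not the four conditions, which become routine once phrased uniformly as ``bounded operators out of $E_1$ are continuous,'' but the inclusion $\mathcal{L}_{TB}\subseteq\mathcal{L}_B$: transferring topological boundedness back into the order structure using only local solidness of $E_2$ and the order bounded neighborhood of $E_1$. Accordingly I would isolate the lemma ``if $E_1$ satisfies any of (i)--(iv), then every $T\in\mathcal{L}_{TB}(E_1,E_2)$ is topologically continuous'' and prove it once, so that the purely topological part of the argument (the second equality) is cleanly separated from the order-theoretic part (the first equality), with the interplay of the two bornologies confined to that single order-theoretic step.
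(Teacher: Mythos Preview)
Your handling of the ``topological'' part is fine and matches the paper's: in each of (i)--(iv) you correctly reduce $\mathcal{L}_{TB}\subset\mathcal{L}_{TC}$ to a standard fact (pseudometrizability, seminormability via Kolmogorov, or the defining property of bornological spaces), and your argument for $\mathcal{L}_B\subset\mathcal{L}_{TB}$ via Theorem~\ref{theorem1.2} on $E_1$ followed by Theorem~\ref{theorem1.1} on $E_2$ is exactly what the paper uses implicitly.

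The genuine gap is the one you yourself flag as ``delicate'': the direct inclusion $\mathcal{L}_{TB}\subset\mathcal{L}_B$. To carry a $\tau_2$-bounded set back to an order bounded set in $E_2$ you would need Theorem~\ref{theorem1.2} on $E_2$, i.e., an order bounded $\tau_2$-neighborhood of zero --- but the standing hypothesis only gives that on $E_1$, while $E_2$ is merely locally solid. You acknowledge this obstruction but do not resolve it; the attempt to ``exploit the order bounded neighborhood assumption'' cannot work here because that assumption lives on the wrong side.

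The paper bypasses this step entirely by arguing \emph{cyclically} rather than proving the two equalities separately. It first invokes Theorem~\ref{theorem2.2} to obtain $\mathcal{L}_{TC}(E_1,E_2)\subset\mathcal{L}_B(E_1,E_2)$, and then only needs the chain
\[
\mathcal{L}_{TC}\ \subset\ \mathcal{L}_B\ \subset\ \mathcal{L}_{TB}\ \subset\ \mathcal{L}_{TC},
\]
the last inclusion being the one supplied by each of (i)--(iv). The loop closes and all three classes coincide, with no need ever to pass from $\tau_2$-bounded to order bounded inside $E_2$. The missing idea in your plan is precisely this third inclusion $\mathcal{L}_{TC}\subset\mathcal{L}_B$, which you should add (and justify under the present hypotheses) in place of the unprovable direct inclusion $\mathcal{L}_{TB}\subset\mathcal{L}_B$.
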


\begin{proof}
%First, note that Theorem \ref{theorem2.2} shows that $\mathcal{L}_{TC}(E_1, E_2)\subset \mathcal{L}_{B}(E_1, E_2)$. Hence,
In view of Theorem~\ref{theorem2.2} and Theorem~\ref{theorem5.1}, it suffices to establish that $\mathcal{L}_{tb}(E_1, E_2)\subset\mathcal{L}_{tc}(E_1, E_2)$.
\begin{enumerate}
  \item [(i)]Since $(E_1, \tau_1)$ has a countable neighborhood base at zero, it is pseudometrizable, implying  $\mathcal{L}_{tb}(E_1, E_2)\subset\mathcal{L}_{tc}(E_1, E_2)$.
      % Theorem 4.8.3 and Theorem6.5.2 of N & B

  \item [(ii)]A topological vector space with a topologically bounded convex neighborhood of zero is
             seminormed. Therefore, $(E_1, \tau_1)$ and $(E_2, \tau_2)$ are both seminormable, implying $\mathcal{L}_{tb}(E_1, E_2)=\mathcal{L}_{tc}(E_1, E_2)$.
                % Theorem 5.7.4 and Theorem 6.2.1 of N & B
  \item [(iii)]A locally Hausdorff space is normable if and only if it has a topologically bounded neighborhood of zero.
               It follows that $(E_1, \tau_1)$ and $(E_2, \tau_2)$ are both normable. Hence, $\mathcal{L}_{tb}(E_1, E_2)=\mathcal{L}_{tc}(E_1, E_2)$.
  \item [(iv)]This follows from the fact that the class of bornological spaces is the class of locally convex spaces $(E_1, \tau_1)$ such that
               all topologically bounded operators from $(E_1, \tau_1)$ to a locally convex space $(E_2, \tau_2)$
               are topologically continuous. Therefore, we also have $ \mathcal{L}_{tb}(E_1, E_2)\subset\mathcal{L}_{tc}(E_1, E_2)$ in this case.
               % Theorem 13.2.7 of N & B (2011).
\end{enumerate}
\end{proof}

\begin{corollary}\label{corollary3.1}
For $i=1,2$, let $(E_i, \tau_i)$ be a locally solid Riesz space. Suppose any of the four conditions in Theorem \ref{theorem5.2} holds.
\begin{enumerate}
  \item [(i)]If for $i=1, 2$, $(E_i, \tau_i)$ has an order bounded $\tau_i$-neighborhood of zero and $E_2$ is also Dedekind complete, then $\mathcal{L}_{r}(E_1, E_2)=\mathcal{L}_r(E_1, E_2)=\mathcal{L}_{tb}(E_1, E_2)=\mathcal{L}_{tc}(E_1, E_2)$.
      % This follows from Theorem 5.1 and Theorem 5.2
  \item [(ii)]If for $i=1, 2$, $(E_i, \tau_i)$ has a nonempty interior of the positive cone $E_i^+$, then $\mathcal{L}_{b}(E_1, E_2)=\mathcal{L}_{tb}(E_1, E_2)=\mathcal{L}_{tc}(E_1, E_2)$.
  % This follows from Theorem \ref{theorem2.1}, Theorem \ref{theorem2.2} and Theorem \ref{theorem5.2}. Note that by the proof of Theorem \ref{theorem2.2}(iii),
  % $E_2$ has an order bounded $\tau_2$-neighborhood of zero.
  \item[(iii)]If the hypothesis of (ii) holds and $E_2$ is also Dedekind complete, then $\mathcal{L}_{b}(E_1, E_2)=\mathcal{L}_r(E_1, E_2)=\mathcal{L}_{tb}(E_1, E_2)=\mathcal{L}_{tc}(E_1, E_2)$.
\end{enumerate}
\end{corollary}

\noindent \textbf{Remark 1.} If the hypothesis of Corollary \ref{corollary3.1} holds, then we have several interesting results: (i) every topologically bounded operator can be written as the difference of two positive operators; (ii) every positive operator is topologically continuous, and so on. \\

\noindent \textbf{Remark 2.} Let $\mathcal{L}_{n}(E_1, E_2)$ denote the space of all order continuous operators between $E_1$ and $E_2$. Then $\mathcal{L}_{n}(E_1, E_2)$ is a band of $\mathcal{L}_{b}(E_1, E_2)$ when $E_2$ is Dedekind complete and the Riesz-Kantorovich Theorem implies
\begin{equation*}
\mathcal{L}_b(E_1, E_2)=\mathcal{L}_{n}(E_1, E_2)\oplus \mathcal{L}_{n}^d(E_1, E_2).
\end{equation*}
%Theorem 1.57 of \cite{AB2}
Therefore, if the hypothesis of Corollary \ref{corollary3.1} holds, then every topologically bounded or topologically continuous operator can be decomposed as the sum of an order continuous operator and an operator in $\mathcal{L}_{n}^d(E_1, E_2)$. \\

\noindent \textbf{Remark 3.} Similar to Remark 2, when $E_2$ is Dedekind complete, we have
\begin{equation*}
\mathcal{L}_{b}(E_1, E_2)=\mathcal{L}_{c}(E_1, E_2)\oplus \mathcal{L}_{s}(E_1, E_2),
\end{equation*}
where $\mathcal{L}_{c}(E_1, E_2)$ denotes the space of all $\sigma$-order continuous operators and
$\mathcal{L}_{s}(E_1, E_2)$ denotes the space of all singular operators.
%Theorem 1.57 of \cite{AB2}
It follows that if the hypothesis of Corollary \ref{corollary3.1} holds, then every topologically bounded or topologically continuous operator can be decomposed as the sum of a $\sigma$-order continuous operator and a singular operator.

\section*{Acknowledgments}
Thanks are due to the Associate Editor and the anonymous reviewer for thoughtful comments and suggestions that led to significant improvements in this article.  The author also thanks Professor Gerard Buskes for his encouragement. \\

\bibliographystyle{amsplain}

\end{document}